\documentclass[12pt]{amsart}
\usepackage{hyperref,amssymb}

\textheight 8.5in
\textwidth 6 in
\hoffset -0.5in
\topmargin 0in

\theoremstyle{plain}
\newtheorem{theorem}{Theorem}
\newtheorem{proposition}[theorem]{Proposition}
\newtheorem{corollary}[theorem]{Corollary}
\newtheorem{lemma}[theorem]{Lemma}

\theoremstyle{definition}


\newcommand{\term}[1]{{\textit{\textbf{#1}}}}   
\renewcommand{\mid}{\::\:}


\DeclareSymbolFont{bbold}{U}{bbold}{m}{n}
\DeclareSymbolFontAlphabet{\mathbbold}{bbold}
\def\one{\mathbbold{1}}

\DeclareMathOperator{\supp}{supp}

\DeclareMathOperator{\Int}{Int}

\renewcommand{\le}{\leqslant}
\renewcommand{\ge}{\geqslant}

\begin{document}

\title{A representation of sup-completion}

\author{Achintya Raya Polavarapu}
\email{polavara@ualberta.ca}

\author{Vladimir G. Troitsky}
\email{troitsky@ualberta.ca}

\address{Department of Mathematical and Statistical Sciences,
  University of Alberta, Edmonton, AB, T6G\,2G1, Canada.}

\thanks{The second author was supported by an NSERC grant.}
\keywords{vector lattice, sup completion, universal completion}
\subjclass[2010]{Primary: 46A40.}


\date{\today}

\begin{abstract}
  It was showed by Donner in~\cite{Donner:82} that every order
  complete vector lattice $X$ may be embedded into a cone~$X^s$,
  called the sup-completion of~$X$. We show that if one represents the
  universal completion of $X$ as $C^\infty(K)$, then $X^s$ is the set
  of all continuous functions from $K$ to $[-\infty,\infty]$ that
  dominate some element of~$X$. This provides a functional
  representation of~$X^s$, as well as an easy alternative proof of its
  existence.
\end{abstract}

\maketitle

\section{Introduction and preliminaries}

The concept of a sup-completion of a vector lattice was introduced
in~\cite{Donner:82} and then further investigated
in~\cite{Azouzi:19,Azouzi:22}; this concept was utilized in a series
of papers by J.Grobler and C.Labuschagne
\cite{Grobler:14,Grobler:17a,Grobler:17b,Grobler:17c,Grobler:19}.  The
intuitive idea behind sup-completion is rather simple: one wants to
enlarge a function space by allowing functions that take value
$+\infty$ on non-negligible sets. In particular, the sup-completion of
$\mathbb R$ is $(-\infty,+\infty]$. However, the formal definition of
sup-completion and the proof of its existence (for order complete
vector lattices) in~\cite{Donner:82} is quite technical. In this note,
we provide an alternative (and, hopefully, more intuitive) way of
constructing sup-completions.

We refer the reader to~\cite{Aliprantis:03,Aliprantis:06} for
background on vector lattices. Throughout this note, all vector
lattices are assumed to be Archimedean. If $\Omega$ and $\Omega'$ are
two Hausdorff topological spaces, we write $C(\Omega,\Omega')$ for the
set of all continuous functions from $\Omega$ to~$\Omega'$.  We write
$\overline{\mathbb R}$ for $[-\infty,\infty]$. For any interval $I$
in~$\overline{\mathbb R}$, we equip $C(\Omega,I)$ with the pointwise
partial order. We write $C(\Omega)$ for $C(\Omega,\mathbb R)$; this is
a vector lattice under pointwise operations. If $K$ is an extremally
disconnected Hausdorff topological space then $C(K)$ is order (or
Dedekind) complete. Suppose that $K$ is an extremally disconnected
compact Hausdorff topological space; we then write $C^\infty(K)$ for
the vector lattice of all continuous functions from $K$ to
$[-\infty,\infty]$ that are finite almost everywhere (a.e.), that is,
except on a nowhere dense set. Scalar multiplication and lattice
operations on $C^\infty(K)$ are defined pointwise. Addition is defined
a.e.: for $f,g\in C^\infty(K)$ there exists a unique
$h\in C^\infty(K)$ such that $f(t)$ and $g(t)$ are finite and
$h(t)=f(t)+g(t)$ for all $t$ in an open dense set; we then define
$h=f+g$. For a function $u\in C(K,\overline{\mathbb R})$ and
$\lambda\in\overline{\mathbb R}$, we write $\{u\le \lambda\}$ as a
shorthand for $\bigl\{t\in K\mid u(t)\le\lambda\bigr\}$; we define
$\{u=\lambda\}$ in a similar fashion.

Maeda-Ogasawara Theorem asserts that every vector lattice $X$ may be
represented as an order dense sublattice of $C^\infty(K)$, where $K$
is an extremally disconnected compact Hausdorff topological space.  If
$X$ is order complete then it is an ideal in $C^\infty(K)$.

Here is the main theorem of this paper:

\begin{theorem}\label{main}
  Let $X$ be an order complete vector lattice represented in $C^\infty(K)$ as
  above. Then the sup-completion of $X$ is
  \begin{math}
    \bigl\{u\in C(K,\overline{\mathbb R})\mid u\ge f
    \mbox{ for some $f$ in }X\bigr\}.
  \end{math}
\end{theorem}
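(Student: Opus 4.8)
The plan is to exhibit the set $Y:=\bigl\{u\in C(K,\overline{\mathbb R})\mid u\ge f\text{ for some }f\in X\bigr\}$, suitably equipped with operations, as a model of the axioms that characterise the sup-completion in \cite{Donner:82}, and then to appeal to the uniqueness of such a model. On $Y$ one takes the pointwise order inherited from $C(K,\overline{\mathbb R})$, pointwise scalar multiplication with the convention $0\cdot(\pm\infty)=0$, and the addition obtained by adding two functions pointwise on the dense open set where both are $>-\infty$ and then extending continuously to $K$. This addition is well defined: every $u\in Y$ is $>-\infty$ off the nowhere dense set $\{f=-\infty\}$, so $\infty-\infty$ never occurs on a dense open set; and a continuous $\overline{\mathbb R}$-valued function on a dense open subset of the extremally disconnected space $K$ has a unique continuous extension to $K$, the very principle already invoked in the preliminaries to define addition on $C^\infty(K)$.

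The routine axioms I would dispatch first. That $X\subseteq Y$ and that the operations of $Y$ restrict on $X$ to the original ones is immediate, and the monoid laws, distributivity of scalar multiplication, and compatibility of the order with the operations all follow by evaluating the functions involved on appropriate dense open subsets of $K$, where everything is real-valued, together with uniqueness of continuous extensions. For the lattice axiom — every nonempty subset has a supremum — I would use that $C(K,\overline{\mathbb R})$ is a complete lattice (transport the order isomorphism $\overline{\mathbb R}\cong[0,1]$ and use Dedekind completeness of $C(K,[0,1])$, valid since $K$ is extremally disconnected) and observe that for nonempty $A\subseteq Y$ the supremum of $A$ computed in $C(K,\overline{\mathbb R})$ still dominates any fixed member of $A$, hence lies in $Y$ and is the supremum of $A$ there; applied to subsets of $X$ that are bounded above, this shows the lattice operations of $X$ are preserved. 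Cancellation is immediate too: if $s+x=t+x$ with $x\in X$, then $s$ and $t$ agree on the dense open set where both are $>-\infty$ and $x$ is real-valued, hence everywhere.

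The two substantive points are order density of $X$ in $Y$ and the distributivity of addition over suprema. For order density, given $u\in Y$ I would write $u=u^{+}-u^{-}$, where $u^{-}=(-u)\vee 0$ lies in $X$ because it is dominated by $f^{-}\in X$ and $X$ is an ideal in $C^\infty(K)$, while $u^{+}=u+u^{-}$ lies in $C(K,[0,\infty])$. Since $\{x\in X\mid x\le u\}$ contains $\{z-u^{-}\mid z\in X_{+},\ z\le u^{+}\}$ and adding the fixed a.e.-finite function $-u^{-}$ commutes with suprema, matters reduce to proving $p=\sup\{z\in X_{+}\mid z\le p\}$ for each $p\in C(K,[0,\infty])$; and writing $p=\sup_{n}(p\wedge n)$ with every $p\wedge n$ a bounded member of $C(K)_{+}\subseteq C^\infty(K)_{+}$ reduces this further to the standard fact that in an Archimedean vector lattice an order dense sublattice recovers each positive element as the supremum of the positive sublattice elements below it. For distributivity, $s+\sup A=\sup(s+A)$ (for $s\in Y$ and nonempty $A\subseteq Y$) is handled by the same reductions: cancelling an element $x\in X$ with $x\le s$ reduces to $s\in C(K,[0,\infty])$, and truncating $s=\sup_{n}(s\wedge n)$ reduces to the case of a bounded summand, where the identity follows from how suprema in $C(K,\overline{\mathbb R})$ behave under adding a fixed a.e.-finite function.

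The main obstacle, I expect, is precisely this last circle of facts about $C(K,\overline{\mathbb R})$ — completeness of the lattice, unique extension of continuous maps off a dense open set, and the interaction of suprema with the partially defined addition — since they underlie every reduction above. Once they are in place, verifying the axioms of \cite{Donner:82} and thereby identifying $Y$ with the sup-completion of $X$ is straightforward, and this simultaneously yields a new proof that the sup-completion exists.
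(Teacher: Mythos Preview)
Your overall strategy matches the paper's: equip $Y$ with pointwise order and scalar multiplication, define addition by continuous extension off a dense open set, and verify the defining axioms of a sup-completion. You correctly single out the extension principle and the order completeness of $C(K,\overline{\mathbb R})$ as the underlying technical facts; the paper isolates these as a lemma and corollary. For order density (axiom~(vi)) you decompose $u=u^{+}-u^{-}$ and truncate, whereas the paper argues directly by contradiction at a point using a clopen neighbourhood and order density of $X$ in $C^\infty(K)$; both routes work.

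There is, however, a genuine gap. You name ``distributivity of addition over suprema'', $s+\sup A=\sup(s+A)$, as the second substantive axiom, but this identity is \emph{not} among the defining conditions (i)--(viii); in the paper it appears only as a separate lemma (for $A\subseteq X$) used later in the uniqueness proof. The nontrivial axiom you have left unverified is~(viii): if $\sup A=\sup B$ for nonempty $A,B\subseteq C$ and $x\in C_0$, then $\sup\{a\wedge x\mid a\in A\}=\sup\{b\wedge x\mid b\in B\}$. This concerns how meet with a fixed $x\in X$ interacts with suprema, not how addition does, and it does not follow from your additive distributivity; since suprema in $C(K,\overline{\mathbb R})$ are not pointwise, the ``evaluate on a dense open set'' device does not dispose of it either. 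The paper handles (viii) by a separate pointwise-contradiction argument with clopen sets, in the same spirit as its proof of~(vi). Axiom~(vii) is also unaddressed in your list, though that one \emph{is} routine by your dense-open-set method. Finally, you verify $X\subseteq C_0$ via cancellation but never the reverse inclusion $C_0\subseteq X$, which needs the ideal property~(iv) together with $X$ being an ideal in $C^\infty(K)$; and the appeal to uniqueness at the end is superfluous, since once $Y$ satisfies the axioms with $C_0=X$ it is by definition a sup-completion.
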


This theorem may be viewed as an alternative definition of a
sup-completion. Before we prove it, we recall the definition of a
sup-completion from~\cite{Donner:82}. By a \term{cone} we mean a
commutative semigroup with zero $(C,+,0)$ equipped with a non-negative
scalar multiplication operation
$(\lambda,a)\in \mathbb R_+\times C\mapsto\lambda a\in C$, which
satisfies the following conditions:
$\lambda(a+b)=\lambda a+\lambda b$, $(\lambda+\mu)a=\lambda a+\mu a$,
$\lambda(\mu a)=(\lambda\mu)a$, $1a=a$ and $0a=0$ for every $a,b\in C$
and $\lambda,\mu\in\mathbb R$. It is easy to see that the set $C_0$ of
all invertible elements in $C$ is a group. For example, if
$C=(-\infty,\infty]$ then $C$ is a cone and $C_0=\mathbb R$ (we take
$0\cdot\infty=0$). The scalar multiplication on
$\mathbb R_+\times C_0$ may be extended to $\mathbb R\times C_0$ via
$(-r)x=-(rx)$ when $r>0$; it is straightforward that $C_0$ is a vector
space over~$\mathbb R$.

We now impose several additional conditions that describe an order on
$C$ and the way $C_0$ ``sits'' in~$C$:
\begin{enumerate}
 \item\label{order} $C$ is equipped with a partial order, such that 
  $a\le b$ implies $a+c\le b+c$ and $\lambda a\le\lambda b$
  for all $a,b,c\in C$ and $\lambda\in\mathbb R_+$;
 \item\label{lattice} $C$ is a lattice under this order;
 \item\label{greatest} $C$ has the greatest element;
 \item\label{ideal} $C_0$ has an ideal property in $C$ in the sense that
  if $x\in C_0$ and $a\in C$ such that $a\le x$ then $a\in C_0$
 \item\label{oc} $C$ is order complete in the sense that every subset $A$ of
  $C$ has supremum; if $A$ is bounded below then $\inf A$ exists;
 \item\label{o-dense} $C_0$ is order dense in $C$ in the following
  sense: $a=\sup\{x\in C_0\mid x\le a\}$ for every $a\in C$;
 \item\label{dist} $a+(x\wedge b)=(a+x)\wedge(a+b)$ whenever $a,b\in C$
  and $x\in C_0$;
 \item\label{sup-sup} for any two non-empty subsets $A$ and $B$ of $C$,
  if $\sup A=\sup B$ and $x\in C_0$ then
  \begin{math}
    \sup\{a\wedge x\mid a\in A\}=\sup\{b\wedge x\mid b\in B\}
  \end{math}
  in~$C$.
\end{enumerate}
It is easy to see that $C_0$ is an order complete vector lattice. We
say that $C$ is a sup-completion of~$C_0$. More precisely, if $X$ an
order complete vector lattice and $X=C_0$ for a cone $C$ satisfying
the properties listed above, we say that $C$ is a
\term{sup-completion} of~$X$. It was proven in~\cite{Donner:82} that
every order complete vector lattice admits a sup-completion;
Theorem~\ref{main} provides an alternative proof of this.

\section{Proof of Theorem~\ref{main}} 

The proof is tedious but
straightforward. Let $C$ be the set in the
theorem:
\begin{equation}\label{C-def}
    C=\bigl\{u\in C(K,\overline{\mathbb R})\mid u\ge f
    \mbox{ for some $f$ in }X\bigr\}.
\end{equation}
By definition, $X$ is a subset of~$C$. It is easy to see that
the set $\{u>-\infty\}$ is open and dense for every $u\in C$.

We will now define operations on $C$.  Non-negative scalar
multiplication on $C$ is defined pointwise; it clearly satisfies
$(\lambda+\mu)u=\lambda u+\mu u$, $\lambda(\mu u)=(\lambda\mu)u$,
$1u=u$, and $0u=0$ for every $u\in C$ and $\lambda,\mu\in\mathbb
R$. On $X$, it agrees with the non-negative scalar multiplication of
$C^\infty(K)$.

Defining addition on $C$ requires some care. We do it similarly to
$C^\infty(K)$. Recall that $K$ is extremally disconnected.

\begin{lemma}\label{Cinfty-ext}
  Suppose that
  $u\colon U\to\overline{\mathbb R}$ is a continuous function on an
  open dense subset $U$ of~$K$. Then $u$ extends uniquely to a
  function in $C(K,\overline{\mathbb R})$.
\end{lemma}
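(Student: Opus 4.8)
The plan is to construct the extension $\bar u$ pointwise at each $t\in K\setminus U$ by a "squeeze" using level sets, exploiting extremal disconnectedness to make the candidate value well-defined, and then to verify continuity separately at the finite points and at the points where $\bar u(t)=\pm\infty$.

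First I would treat the case where $u$ is bounded, say $u\colon U\to[a,b]$ with $a,b\in\mathbb R$. Here the cleanest route is to compose with a homeomorphism $[a,b]\cong[0,1]$ and recall the classical fact that a bounded continuous real function on a dense subset of an extremally disconnected space extends uniquely to a continuous function on the whole space. One way to see this: for $t\in K$ and $\lambda\in\mathbb R$, set $O_\lambda=\Int\overline{\{s\in U\mid u(s)<\lambda\}}$ (closure and interior in $K$); since $K$ is extremally disconnected, $\overline{\{u<\lambda\}}$ is clopen, so $O_\lambda$ is clopen, and the family $\{O_\lambda\}_\lambda$ is increasing with union $K$ and empty intersection (using boundedness and density of $U$). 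Then $\bar u(t):=\inf\{\lambda\mid t\in O_\lambda\}$ is well-defined, agrees with $u$ on $U$ by continuity, and is continuous because $\{\bar u<\lambda\}=\bigcup_{\mu<\lambda}O_\mu$ is open and $\{\bar u>\lambda\}=\bigcup_{\mu>\lambda}(K\setminus\overline{O_\mu})$ is open. Uniqueness is immediate since $U$ is dense and $\overline{\mathbb R}$ is Hausdorff.

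Next I would pass to the unbounded case. The trick is to transport through a homeomorphism $\varphi\colon\overline{\mathbb R}\to[-1,1]$ (e.g.\ $\varphi(x)=\tfrac{x}{1+|x|}$ on $\mathbb R$, $\varphi(\pm\infty)=\pm1$); then $\varphi\circ u\colon U\to[-1,1]$ is bounded and continuous, so by the previous paragraph it extends to $g\in C(K,[-1,1])$, and I set $\bar u:=\varphi^{-1}\circ g\in C(K,\overline{\mathbb R})$. This $\bar u$ is continuous as a composition of continuous maps, extends $u$, and is the unique such extension. This neatly avoids any case analysis around the points carrying value $+\infty$ or $-\infty$: those are simply the points where $g=\pm1$, and continuity of $\varphi^{-1}$ at $\pm1$ handles everything.

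The main obstacle is establishing the bounded-case extension with genuine care — specifically, checking that the clopen sets $O_\lambda$ behave correctly (that extremal disconnectedness really gives $\overline{\{u<\lambda\}}$ clopen, that $\bar u$ restricted to $U$ recovers $u$, and that both $\{\bar u<\lambda\}$ and $\{\bar u>\lambda\}$ are open). An alternative that sidesteps reproving this is to cite it directly: $C(K)$ for $K$ extremally disconnected compact is Dedekind complete, and more relevantly, $K$ is extremally disconnected iff every bounded continuous real function on a dense subspace extends continuously (this is standard, e.g.\ in Gillman--Jerison or in~\cite{Aliprantis:03}). If such a reference is acceptable, the lemma reduces to the short composition argument with $\varphi$; otherwise the level-set construction above is the self-contained fallback, and it is the only part requiring real work.
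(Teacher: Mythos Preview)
Your proposal is correct, and the overall architecture matches the paper's: both reduce to the bounded case by composing with a homeomorphism $\overline{\mathbb R}\cong[-1,1]$, and both derive uniqueness from density of $U$ and Hausdorffness of the target.

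The genuine difference is in how the bounded extension is produced. The paper works lattice-theoretically: it sets $G=\{v\in C(K,[-1,1])\mid v|_U\ge u\}$, invokes Dedekind completeness of $C(K)$ to form $w=\inf G$, and then checks $w|_U=u$ by finding, for each $t\in U$, a clopen $V$ with $t\in V\subseteq U$ and comparing $w$ against $u\cdot\one_V\pm\one_{K\setminus V}$. Your route is pointwise and topological: you build $\bar u$ directly from the clopen level sets $O_\lambda=\overline{\{u<\lambda\}}$ and read off continuity from $\{\bar u<\lambda\}=\bigcup_{\mu<\lambda}O_\mu$ and $\{\bar u>\lambda\}=\bigcup_{\mu>\lambda}(K\setminus O_\mu)$. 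The paper's argument is shorter in context because order completeness of $C(K)$ is already on the table, but it quietly uses zero-dimensionality (a clopen base at each point) to locate $V\subseteq U$; your level-set construction uses only the defining property of extremal disconnectedness (closures of open sets are open) and is fully self-contained. Either approach is standard; yours would need the verifications you outlined (agreement on $U$, openness of the sub- and super-level sets) spelled out, which you have sketched correctly.
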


\begin{proof}
  Since $\overline{\mathbb R}$ is topologically and order isomorphic
  to $[-1,1]$ via, say, $\tan\frac{\pi t}{2}$, we may replace
  $\overline{\mathbb R}$ in the statement with $[-1,1]$. 
  So suppose that
  $u\colon U\to[-1,1]$. Let
  \begin{displaymath}
    G=\Bigl\{v\in C\bigl(K,[-1,1]\bigr)
    \mid\forall t\in U\quad v(t)\ge u(t)\Bigr\}.
  \end{displaymath}
  Since $K$ is extremally disconnected, $C(K)$ is order complete. It
  follows from $G\ge -\one$ that $w:=\inf G$ exists in
  $C(K)$. Clearly, $w\in C\bigl(K,[-1,1]\bigr)$.

  We will now show that $w$ extends~$u$.  Fix $t\in U$. Since $K$ is
  totally disconnected, we can find a clopen set $V$ such that
  $t\in V\subseteq U$. Put $v=u\cdot\one_V+\one_{K\setminus V}$; then
  $v\in G$ and, therefore, $w\le v$; it follows that
  $w(t)\le u(t)$. On the other hand, for
  every $v\in G$ we have $v\ge u\cdot\one_V-\one_{K\setminus V}$, so that
  $w\ge u\cdot\one_V-\one_{K\setminus V}$ and, therefore, $w(t)\ge u(t)$.
 
  This proves that $w$ extends $u$. Since $U$ is dense, the extension
  is unique.
\end{proof}

\begin{corollary}\label{sum-dense}
  Let $u_1,u_2\in C(K,\overline{\mathbb R})$ and let
  $U=\{u_1>-\infty\}\cap\{u_2>-\infty\}$. If $U$ is dense than there
  exists a unique $u\in C(K,\overline{\mathbb R})$ such that
  $u(t)=u_1(t)+u_2(t)$ for all $t\in U$.
\end{corollary}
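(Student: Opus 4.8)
The plan is to reduce the statement directly to Lemma~\ref{Cinfty-ext}, once we check that $t\mapsto u_1(t)+u_2(t)$ is a well-defined continuous map from $U$ into $\overline{\mathbb R}$. So the work splits into three small pieces: $U$ is open dense, the pointwise sum makes sense on $U$, and it is continuous there.

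First I would observe that $U$ is open. For $i=1,2$ the set $\{u_i>-\infty\}=u_i^{-1}\bigl((-\infty,\infty]\bigr)$ is the preimage of an open subset of $\overline{\mathbb R}$ under the continuous function $u_i$, hence open; and $U$ is the intersection of the two. By hypothesis $U$ is also dense, so it satisfies the hypotheses of Lemma~\ref{Cinfty-ext}.

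Next, for $t\in U$ both $u_1(t)$ and $u_2(t)$ lie in $(-\infty,\infty]$, so the sum $u_1(t)+u_2(t)$ is unambiguously defined as an element of $(-\infty,\infty]\subseteq\overline{\mathbb R}$; there is no $\infty-\infty$ ambiguity, which is precisely why we intersect the two sets where the functions are finite-or-$+\infty$. Moreover, addition regarded as a map $(-\infty,\infty]\times(-\infty,\infty]\to(-\infty,\infty]$ is continuous (a routine verification, most painlessly done through the order homeomorphism of $\overline{\mathbb R}$ with a compact interval used in the proof of Lemma~\ref{Cinfty-ext}, or by checking sequentially at points where a coordinate is $+\infty$). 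Composing with the continuous map $t\mapsto\bigl(u_1(t),u_2(t)\bigr)$ on $U$ shows that $t\mapsto u_1(t)+u_2(t)$ is continuous on $U$ as a map into $\overline{\mathbb R}$.

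Finally, I would apply Lemma~\ref{Cinfty-ext} to this function to obtain a unique $u\in C(K,\overline{\mathbb R})$ with $u(t)=u_1(t)+u_2(t)$ for all $t\in U$; uniqueness is immediate from density of $U$ and continuity, exactly as in the lemma. I do not expect a real obstacle here; the only point requiring a moment's care is the continuity of addition near the points where one of the values is $+\infty$, and even that is harmless because we have deliberately excluded the problematic combination $(+\infty)+(-\infty)$ by the definition of~$U$.
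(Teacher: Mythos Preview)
Your proposal is correct and follows essentially the same route as the paper: define the pointwise sum on $U$ as a continuous map into $(-\infty,\infty]$ and invoke Lemma~\ref{Cinfty-ext} for existence and density of $U$ for uniqueness. You simply spell out in more detail (openness of $U$, continuity of addition on $(-\infty,\infty]^2$) what the paper asserts in a single sentence.
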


\begin{proof}
  Define $v\colon U\to\mathbb R\cup\{+\infty\}$ via
  $v(t)=u_1(t)+u_2(t)$; $v$ is well defined and continuous. By
  Lemma~\ref{Cinfty-ext}, $v$ extends to a function
  $u\in C(K,\overline{\mathbb R})$. Uniqueness follows from the
  density of~$U$.
\end{proof}

We are now ready to define addition on $C$. Suppose that $u_1,u_2\in
C$. Let $U=\{u_1>-\infty\}\cap\{u_2>-\infty\}$. 
There exist $f_1,f_2\in X$ such that $f_1\le u_1$ and $f_2\le
u_2$. Let $V$ be the set where both $f_1$ and $f_2$ are finite. Then
$V$ dense. It follows from $V\subseteq U$ that $U$ is
dense. Let $u$ be as in Corollary~\ref{sum-dense}. For every $t\in V$,
we have
\begin{displaymath}
  (f_1+f_2)(t)=f_1(t)+f_2(t)\le u_1(t)+u_2(t)=u(t).
\end{displaymath}
Since $V$ is dense, continuity of $f_1+f_2$ and $u$ implies that
$f_1+f_2\le u$ on $K$. Therefore, $u\in C$. Naturally, we define $u=u_1+u_2$.

Clearly, this definition does not depend on the choice of $f_1$ and~$f_2$.
It is straightforward that $u_1+u_2=u_2+u_1$ and that
$\lambda(u_1+u_2)=\lambda u_1+\lambda u_2$ when
$\lambda\in\mathbb R_+$ and $u_1,u_2\in C$. Furthermore, if
$u_1,u_2,u_3\in C$, we have $(u_1+u_2)+u_3=u_1+(u_2+u_3)$ because the
two continuous functions agree with $u_1(t)+u_2(t)+u_3(t)$ for every
$t$ is the dense set where all the three functions are different from
$-\infty$. This shows that $C$ is a cone.

We claim that $C_0=X$. Indeed, if $u\in C_0$ then $u,-u\in C$, hence,
there exist $f,g\in X$ such that $f\le u\le g$. It follows that $u$ is
finite on an open dense set, hence $u\in C^\infty(K)$. Since $X$ is
order complete, it is an ideal in $C^\infty(K)$ and, therefore,
$u\in X$. Conversely, if $f\in X$ then, clearly, $f$ and $-f$ are both
in~$C$, hence $f\in C_0$. Note that the operations of addition and
non-negative scalar multiplication that we defined on $C$ agree with
those of $C^\infty(K)$ on $C\cap C^\infty(K)$. It follows that the
vector space operations induced on $C_0$ by $C$ agree with the
``native'' operations on $C^\infty(K)$.

We define order on $C$ pointwise. It follows from the definition of
$C$ that if $u\in C$ and $v\in C(K,\overline{\mathbb R})$ with
$u\le v$ then $v\in C$. We will now verify conditions
\eqref{order}--\eqref{sup-sup}.

\eqref{order}, \eqref{lattice}, and \eqref{greatest} are
straightforward. It is easy to see that lattice operations on $C$ are
pointwise.

\eqref{ideal} Suppose that $v\le h$ for some $v\in C$ and $h\in
X$. There exists $f\in X$ such that $f\le v\le h$. It follows that
$v\in C^\infty(K)$ and, furthermore, $v\in X$.

Observe that if $f\le u\le g$ for some $f,g\in X$ and
$u\in C(K,\overline{\mathbb R})$ then $u\in X$. Indeed, it follows
from $f\le u$ that $u\in C$; it now follows from \eqref{ideal} that
$u\in X$.

\eqref{oc}
As in the proof of Lemma~\ref{Cinfty-ext}, we observe that
$C(K,\overline{\mathbb R})$ is order complete. Let $A\subseteq C$ with
$A\ne\varnothing$. It follows that $v:=\sup A$
exists in $C(K,\overline{\mathbb R})$. Take any $w\in A$, then $v\ge w\in
C$ implies $v\in C$, hence $v$ is the supremum of $A$ in~$C$.

Now suppose that $u\le A$ for some $u\in C$. Then $v:=\inf A$ exists in
$C(K,\overline{\mathbb R})$; it follows from $u\le v$ that $v\in C$
and, therefore, $v$ is the infimum of $A$ in~$C$.

\eqref{o-dense} Suppose that $u\in C$ and let
$A=\{f\in X\mid f\le u\}$; we need to show that $u=\sup A$. By the
definition of $C$, $A$ is non-empty; fix some $h\in A$. We clearly
have $A\le u$. Suppose $A\le v$ for some $v\in C$; it suffices to show
that $u\le v$. Suppose not. Then there exists $t_0\in K$ with
$v(t_0)<u(t_0)$. Find a clopen neighbourhood $U$ of $t_0$ such that
$v(t)<u(t)$ for all $t\in U$. Since $\one_U$ is in $C^\infty(K)$ and
$X$ is order dense in $C^\infty(K)$, we can find $g\in X$ such that
$0<g\le\one_U$. Then $g(t_1)>0$ for some $t_1\in U$. Let $f$ be a
scalar multiple of $g$ such that $v(t_1)<f(t_1)<u(t_1)$. It follows
from $f\wedge h\le f\wedge u\le f$ that $f\wedge u\in X$ and, therefore,
$f\wedge u\in A$. However, $(f\wedge u)(t_1)>v(t_1)$, which
contradicts $A\le v$.

\eqref{dist} Let $u,v\in C$ and $f\in X$; we need to prove that
$u+(f\wedge v)=(u+f)\wedge(u+v)$. Let $U$ be the set on which $u$,
$v$, and $f$ are all different from~$-\infty$. Then $U$ is open and
dense, and it is straightforward that the functions $u+(f\wedge v)$
and $(u+f)\wedge(u+v)$ agree on~$U$. Since they are continuous, they
are equal on~$K$.

\eqref{sup-sup} Suppose that $\sup A=\sup B$ for two non-empty subsets
$A$ and $B$ of~$C$, and let $f\in X$. It suffices to show that
$\sup(A\wedge f)\le\sup(B\wedge f)$. Suppose not. Then there exists
$u\in A$ such that $u\wedge f\not\le h$, where $h=\sup(B\wedge
f)$. There exists $t\in K$ such that $u(t)\wedge f(t)>h(t)$. Fix
$\lambda\in\mathbb R$ such that  $u(t)\wedge f(t)>\lambda>h(t)$. By
continuity, we can find a clopen neighbourhood $U$ of $t$ such that for all
$s\in U$ we have
\begin{math}
  u(s)\wedge f(s)>\lambda>h(s)\ge v(s)\wedge f(s)
\end{math}
for $v\in B$. It follows from 
\begin{math}
  u(s)\wedge f(s)>\lambda> v(s)\wedge f(s)
\end{math}
that $u(s)>\lambda>v(s)$ for all $s\in U$ and $v\in B$.

Consider the function $w\in C(K,\overline{\mathbb R})$ that is equal
to $\lambda$ on $U$ and $+\infty$ on~$K\setminus U$. Then $w\ge v$ for
all $v\in B$ and, therefore, $w\ge b$, where $b=\sup B$. It follows
that $u(s)>w(s)\ge b(s)$ for all $s\in U$. This contradicts $\sup A=\sup B$.

This completes the proof of the theorem. We now establish a 
useful distributive property of~$C$. Here $C$ is as in~\eqref{C-def}.

\begin{lemma}\label{distr}
  For every $u\in C$ and $A\subseteq X$, $\sup(u+A)=u+\sup A$.
\end{lemma}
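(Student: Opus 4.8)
The plan is to establish the two inequalities separately. The easy one is $\sup(u+A)\le u+\sup A$: for each $a\in A$ we have $a\le\sup A$, hence $u+a\le u+\sup A$ by monotonicity of addition on $C$ (property~\eqref{order}), so $u+\sup A$ is an upper bound of $u+A$. We may assume $A\ne\varnothing$; writing $b=\sup A$, the real content is the reverse inequality, which I would phrase as: \emph{if $v\in C$ satisfies $v\ge u+a$ for every $a\in A$, then $v\ge u+b$.}

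The argument for this would be pointwise and by contradiction, relying on two things already in hand. First, by the proof of property~\eqref{oc}, $b$ is simultaneously the supremum of $A$ in $C(K,\overline{\mathbb R})$, so $b$ is the \emph{least} continuous function on $K$ dominating every member of $A$. Second, on the open dense set $Q:=\{u>-\infty\}\cap\{b>-\infty\}$ the sum $u+b$ agrees pointwise with $t\mapsto u(t)+b(t)$, and $u+a$ agrees with $t\mapsto u(t)+a(t)$ on $Q\cap\{a>-\infty\}$, by the way addition was defined through Lemma~\ref{Cinfty-ext} and Corollary~\ref{sum-dense}. Now suppose $v\not\ge u+b$. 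Since $v$ and $u+b$ are continuous and $Q$ is dense, there is $t_0\in Q$ with $v(t_0)<u(t_0)+b(t_0)$, and here $u(t_0),b(t_0)>-\infty$. Pick real numbers
\[
  v(t_0)<\lambda<u(t_0)+b(t_0),\qquad
  \lambda-u(t_0)<\mu<b(t_0),\qquad
  \lambda-\mu<\nu<u(t_0);
\]
each interval is nonempty and meets $\mathbb R$ (the first since $v(t_0)<u(t_0)+b(t_0)$, the second since $\lambda<u(t_0)+b(t_0)$, the third since $\lambda<u(t_0)+\mu$, using throughout that $u(t_0),b(t_0)>-\infty$). Since $K$ is totally disconnected, choose a clopen neighbourhood $W$ of $t_0$ with $W\subseteq Q$ on which $u>\nu$ and $v<\lambda$.

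The crucial step invokes the minimality of $b$. Let $c\in C(K,\overline{\mathbb R})$ equal $b\wedge\mu$ on $W$ and $b$ on $K\setminus W$; it is continuous because $W$ is clopen, it satisfies $c\le b$, and $c(t_0)=\mu<b(t_0)$, so $c\ne b$. Consequently $c$ cannot dominate every element of $A$ (else $b=\sup A\le c\le b$). So there are $a\in A$ and $t_1\in K$ with $a(t_1)>c(t_1)$; since $c=b\ge a$ on $K\setminus W$, necessarily $t_1\in W$, and then $a(t_1)>c(t_1)=b(t_1)\wedge\mu$ together with $a(t_1)\le b(t_1)$ forces $\mu\le b(t_1)$ and $a(t_1)>\mu$. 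Finally $t_1\in W\subseteq Q$ gives $u(t_1)>\nu>-\infty$ and $a(t_1)>\mu>-\infty$, so $(u+a)(t_1)=u(t_1)+a(t_1)>\nu+\mu>\lambda>v(t_1)$, contradicting $v\ge u+a$. This yields $v\ge u+b$, and hence $\sup(u+A)=u+b=u+\sup A$.

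The only delicate point, and the place where care is needed, is the $\pm\infty$ arithmetic: one must localize everything to $Q$ so that the formal sums in $C$ become honest pointwise sums, choose the constants $\lambda,\mu,\nu$ so that they exist even when $u(t_0)$ or $b(t_0)$ equals $+\infty$, and verify the closing inequality $(u+a)(t_1)>v(t_1)$ also in the cases $u(t_1)=+\infty$ and $a(t_1)=+\infty$. No topological input beyond order completeness of $C(K,\overline{\mathbb R})$ and total disconnectedness of $K$—both already used above—is required.
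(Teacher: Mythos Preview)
Your proof is correct, but it takes a genuinely different route from the paper's. The paper proves the reverse inequality by a band decomposition: writing $U=\overline{\{u<\infty\}}$, it observes that on $U^C$ the function $u$ is identically $+\infty$ so the inequality is trivial, while on $\overline{U}$ the function $u$ is a.e.\ finite and hence lies in $C^\infty(K)$; there the additive inverse $-u$ is available, and applying the easy direction with $-u$ in place of $u$ gives $\sup A=\sup(-u+u+A)\le -u+\sup(u+A)$, whence $u+\sup A\le\sup(u+A)$.

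By contrast, you argue directly and pointwise in $C(K,\overline{\mathbb R})$, producing a witness $a\in A$ and a point $t_1$ at which any putative smaller upper bound $v$ fails. Your approach avoids the splitting into finite and infinite parts and never invokes $-u$; it stays entirely within the order-theoretic description of $C$ and is closer in spirit to the paper's verifications of \eqref{o-dense} and \eqref{sup-sup}. The paper's argument is shorter and more algebraic, exploiting that the obstruction to inverting $u$ is localized on a clopen set where the claim is trivial anyway; yours is more hands-on but self-contained, and your careful bookkeeping with the auxiliary constants $\lambda,\mu,\nu$ correctly handles the $\pm\infty$ cases.
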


\begin{proof}
  For every $f\in A$ we have $f\le \sup A$, so that $u+f\le u+\sup A$
  and, therefore, $\sup(u+A)\le u+\sup A$.

  To prove the converse inequality, let $U=\{u<\infty\}$; observe that
  $\overline{U}$ is clopen. Viewing $u$, $\sup A$, and $\sup(u+A)$ as
  functions in $C(K,\overline{\mathbb R})$, it suffices to prove the
  inequality for their band projections onto the bands generated by
  $\overline{U}$ and $\overline{U}^C$. On the latter set, $u$ is
  identically~$\infty$, so the inequality is easily satisfied. So
  replacing $K$ with~$\overline{U}$, we may assume that $u$ is a.e.\
  finite and, therefore, $u\in C^\infty(K)$. Then $-u$ is defined in
  $C^\infty(K)$ and we have
  \begin{math}
    \sup A=\sup(-u+u+A)\le -u+\sup(u+A),
  \end{math}
  so that $u+\sup A\le\sup(u+A)$.
\end{proof}

\section{Uniqueness}

Donner in~\cite{Donner:82} proved that the sup-completion is
unique. His proof relies on his construction of a sup-completion. We
now present an alternative proof using our Theorem~\ref{main} instead
(but our proof is built on the same ideas as that
in~\cite{Donner:82}). We need the following variant of Riesz
Decomposition Property:

\begin{lemma}\label{RDP}
  Let $C$ be a sup-completion of~$X$. Suppose that $x\le u+v$ for
  some $x\in X$ and $u,v\in C$ with $v\ge 0$. Then there exist
  $y,z\in X$ such that $x=y+z$, $y\le u$, and $z\le v$.
\end{lemma}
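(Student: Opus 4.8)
The plan is to give a direct construction purely from the cone axioms~\eqref{order}--\eqref{sup-sup}, without invoking Theorem~\ref{main}, since this lemma is a step towards the uniqueness theorem and so must not presuppose the concrete representation. The decomposition I would take is the obvious one: set $y = x\wedge u$ and $z = x - y$.

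First I would verify that $y$ and $z$ really lie in $X$. Since $y = x\wedge u\le x\in C_0$, the ideal property~\eqref{ideal} gives $y\in C_0 = X$; in particular $-y$ makes sense in the vector space $C_0$, so $z := x + (-y)$ again lies in $X$. By construction $x = y + z$ and $y\le u$, so the only thing to check is $z\le v$. Here the key move is to transport the inequality across the additive structure: adding the \emph{invertible} element $y$ to both sides (legitimate by~\eqref{order}, and reversible precisely because $y\in C_0$) shows that $z\le v$ is equivalent to $x\le v + y = v + (x\wedge u)$. Now I would apply the distributivity axiom~\eqref{dist} with $a = v$, with $x\in C_0$ in the middle slot, and $b = u$, obtaining $v + (x\wedge u) = (v+x)\wedge(v+u)$. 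It then suffices to see that $x$ sits below both terms of this meet: $x\le v + x$ because $v\ge 0$ and~\eqref{order}, while $x\le v + u = u + v$ is exactly the hypothesis. Hence $x\le (v+x)\wedge(v+u) = v+y$, which yields $z\le v$ and finishes the argument.

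I do not expect a serious obstacle; the proof is short and the only care needed is bookkeeping. Specifically, one must check each time one adds or cancels $y$ that this is valid because $y\in C_0$ is invertible, and one must make sure the application of~\eqref{dist} has its distinguished coordinate occupied by an element of $C_0$ (namely $x$), not by the general cone element $u$. The one conceptual point worth flagging in the write-up is that~\eqref{dist} is precisely the tool that upgrades the ``pointwise'' truncation identity $x = (x\wedge u) + (x - x\wedge u)$ to an order inequality valid in an arbitrary sup-completion. As a sanity check one may also observe that $z\ge 0$ here, since $y\le x$ forces $z = x - y\ge 0$, although this is not required by the statement.
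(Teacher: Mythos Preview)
Your proof is correct and follows essentially the same route as the paper's: both take $y=x\wedge u$, $z=x-y$, and use the distributivity axiom~\eqref{dist} with the $C_0$-slot occupied by $x$ to establish $z\le v$. The only cosmetic difference is that the paper computes $v-z=(v-x)+x\wedge u=(v-x+x)\wedge(v-x+u)=v\wedge(u+v-x)\ge 0$ directly, whereas you first reduce $z\le v$ to $x\le v+(x\wedge u)$ and then expand the right-hand side; these are the same computation up to translation by~$-x$.
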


\begin{proof}
  Since $x\wedge u\in X$ by~\eqref{ideal}, we can define $y=x\wedge u$
  and $z=x-x\wedge u$ in $X$. We clearly have $x=y+z$ and $y\le u$. It
  is left to verify that $z\le v$. Since $z\in X$, $v-z=v+(-z)$ is
  defined. Using~\eqref{dist}, we get
  \begin{displaymath}
    v-z=(v-x)+x\wedge u=(v-x+x)\wedge(v-x+u)=v\wedge(u+v-x)\ge 0.
  \end{displaymath}
\end{proof}

\begin{theorem}
  Sup-completion of an order complete vector lattice is unique. That
  is, if $C$ and $D$ are two sup-completions of $X$ then there exists
  a bijection $J\colon D\to C$ such that $u\le
  v$ iff $Ju\le Jv$, $J(\alpha u)=\alpha Ju$, and $J(u+v)=Ju+Jv$ when
  $u,v\in D$ and $\alpha\ge 0$, and $J$ agrees with the identity on~$X$.
\end{theorem}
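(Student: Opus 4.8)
The plan is to define $J$ explicitly from order density and to check its properties using Lemmas~\ref{RDP} and~\ref{distr} together with the axioms \eqref{order}--\eqref{sup-sup}. For $u\in D$ put $A_u=\{x\in X\mid x\le u\}$; by axiom~\eqref{o-dense} for $D$ this set is non-empty and $u=\sup_D A_u$. Since $C$ is order complete by \eqref{oc}, we may define $Ju=\sup_C A_u\in C$, and we define $J'\colon C\to D$ by the symmetric formula $J'v=\sup_D\{x\in X\mid x\le v\}$. The routine part is to verify, using only \eqref{order}, that $J$ (and likewise $J'$) is monotone, that it restricts to the identity on $X$ (because $x_0=\max A_{x_0}$), and that $J(\alpha u)=\alpha Ju$ for $\alpha\ge 0$ (because $A_{\alpha u}=\alpha A_u$ when $\alpha>0$, and $\sup_C A_0=0$).

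Next I would prove additivity, $J(u+v)=Ju+Jv$. For ``$\le$'' it suffices to show that every $x\in X$ with $x\le u+v$ satisfies $x\le_C Ju+Jv$; fixing $g\in A_v$ one has $v-g\ge 0$ in $D$, so Lemma~\ref{RDP} applied in $D$ to $x\le (u+g)+(v-g)$ yields $y,z\in X$ with $x=y+z$, $y\le u+g$ and $z\le v-g$. Then $y-g\in A_u$ and $z+g\in A_v$, whence $y-g\le_C Ju$ and $z+g\le_C Jv$, and therefore $x=(y-g)+(z+g)\le_C Ju+Jv$. For ``$\ge$'' I would apply Lemma~\ref{distr} twice to obtain $Ju+Jv=\sup_C\{x+y\mid x\in A_u,\ y\in A_v\}$; since each such $x+y$ lies in $A_{u+v}$, this supremum is at most $J(u+v)$.

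The heart of the matter is that $J'\circ J=\mathrm{id}_D$ and $J\circ J'=\mathrm{id}_C$; by symmetry it is enough to prove the first, and for that it suffices to show $A_u=\{x\in X\mid x\le_C Ju\}$ for every $u\in D$ (then $J'(Ju)=\sup_D A_u=u$). The inclusion $\subseteq$ is immediate. For $\supseteq$, fix $x\in X$ with $x\le_C Ju$ and consider $S=\{f\wedge x\mid f\in A_u\}$; by \eqref{ideal} this is a subset of $X$ bounded above by $x$, and using \eqref{ideal} once more one checks that its supremum computed in $C$, in $D$, and in $X$ all coincide. Applying \eqref{sup-sup} in $C$ to the pair $A_u$ and $\{Ju\}$ (both with supremum $Ju$) gives $\sup_X S=\sup_C S=Ju\wedge x=x$, while applying \eqref{sup-sup} in $D$ to the pair $A_u$ and $\{u\}$ (both with supremum $u$, by \eqref{o-dense}) gives $\sup_X S=\sup_D S=u\wedge x$; hence $u\wedge x=x$, i.e.\ $x\in A_u$.

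Finally, $J$ is a bijection with inverse $J'$, it agrees with the identity on $X$, it preserves $+$ and non-negative scalar multiplication by the above, and it is order reflecting because $Ju\le Jv$ implies $u=J'Ju\le J'Jv=v$ by monotonicity of $J'$; this is precisely the statement of the theorem. The step I expect to be the main obstacle is the one in the previous paragraph: making axiom~\eqref{sup-sup} compare suprema taken in the two different cones $C$ and $D$, which forces one to route the argument through suprema computed inside $X$ and to know these agree with the ambient suprema.
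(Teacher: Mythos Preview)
Your argument is essentially the paper's, with two pleasant local variations: for $J(u+v)\le Ju+Jv$ you shift by a fixed $g\in A_v$ to reduce directly to the positive case of Lemma~\ref{RDP} (the paper instead splits $v=v^+-v^-$ and first proves $J(y+u)=y+J(u)$ for $y\in X$), and for bijectivity you prove $A_u=\{x\in X\mid x\le_C Ju\}$ directly via two applications of~\eqref{sup-sup}, one in $C$ and one in $D$, routed through $\sup_X$ (the paper instead shows that $\sup_D A=\sup_D B$ forces $\sup_C A=\sup_C B$, which is the same use of~\eqref{sup-sup} packaged differently). Both proofs rest on the same four ingredients: \eqref{o-dense}, \eqref{sup-sup}, Lemma~\ref{RDP}, and Lemma~\ref{distr}.

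There is one loose end. Lemma~\ref{distr} is stated and proved only for the concrete cone of~\eqref{C-def}; its proof genuinely uses the $C(K,\overline{\mathbb R})$ representation (it splits on $\overline{\{u<\infty\}}$ and passes to $-u$ in $C^\infty(K)$), and it is not clear that it follows from axioms \eqref{order}--\eqref{sup-sup} alone. You invoke it in your abstract $C$ without comment. The fix is exactly the reduction the paper makes at the outset: since Theorem~\ref{main} furnishes one concrete sup-completion, it suffices to show that an arbitrary $D$ is isomorphic to that one, so you may assume $C$ is as in~\eqref{C-def}. Your bijectivity argument is fully symmetric and uses only the axioms, so it is unaffected; only the ``$\ge$'' half of additivity actually needs Lemma~\ref{distr} and hence the concrete $C$.
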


\begin{proof}
  WLOG, $C$ is the sup-completion that we constructed in
  Theorem~\ref{main}, i.e., $C$ is as in~\eqref{C-def}. For a
  non-empty set $A\subseteq X$, we define
  $J\bigl(\sup_DA)=\sup_CA$. Let's verify that $J$ is
  well-defined. Suppose that $A$ and $B$ are two non-empty subsets of
  $X$ with $\sup_DA=\sup_DB$. For every $a\in A$, \eqref{sup-sup} yields
  \begin{displaymath}
    \textstyle a=\sup_D(A\wedge a)=\sup_D(B\wedge a)
    =\sup_X(B\wedge a)\le\sup_CB.
  \end{displaymath}
  It follows that $\sup_CA\le\sup_CB$. The opposite inequality is
  similar.  It is left to verify that $J$ is defined on all of $D$: if
  $u\in D$ then it follows from~\eqref{o-dense} that $u=\sup_DA$ where
  $A=\{x\in X\mid x\le u\}$. Hence, $J(u)=\sup_CA$, that is,
  \begin{equation}
    \label{Ju}
    J(u)=\textstyle\sup_C\{x\in X\mid x\le u\}
  \end{equation}
  Since the definition of $J$ is symmetric, it is easy to see that $J$
  is a bijection, with the inverse given by
  $J^{-1}\bigl(\sup_CA)=\sup_DA$ for $A\subseteq X$.
  It is straightforward that $J(\alpha u)=\alpha J(u)$ when
  $u\in D$ and $\alpha\ge 0$. It follows from~\eqref{Ju} that $u\le v$
  implies $J(u)\le J(v)$. Since the definition of $J$ is symmetric,
  the converse is also satisfied, hence $u\le v$ iff $J(u)\le
  J(v)$. 
  
  It is left to show that $J$ is additive. If $u\in D$ and $y\in X$,
  it follows from~\eqref{Ju} and Lemma~\ref{distr} that
  \begin{displaymath}
    J(y+u)=\textstyle\sup_C\bigl\{x\in X\mid x\le y+u\bigr\}
    =\sup_C\bigl(y+A)
    =y+\sup_CA=y+J(u),
  \end{displaymath}
  where $A=\{x\in X\mid x\le u\}$.
  
  Now fix $u,v\in D$. Put
  $A=\{x\in X\mid x\le u\}$ and $B=\{y\in X\mid y\le v\}$. Then
  $u=\sup_DA$ and $v=\sup_DB$ by~\eqref{o-dense}, and
  $J(u)=\sup_CA$ and $J(v)=\sup_CB$ by~\eqref{Ju}. Fix $x\in A$
  and $y\in B$. We have $x+y\le u+v$. It follows from~\eqref{Ju} that
  $x+y\le J(u+v)$, so that $x\le J(u+v)-y$. Taking supremum in $C$ over $x\in
  A$, we get $J(u)\le J(u+v)-y$ and, therefore, $J(u)+y\le J(u+v)$. By
  Lemma~\ref{distr}, we have
  \begin{displaymath}
    J(u)+J(v)=\textstyle J(u)+\sup_CB=\sup_C(J(u)+B)=\sup_C\{J(u)+y\mid y\in B\}
    \le J(u+v).
  \end{displaymath}

  To prove the other inequality, we first assume that $v\ge 0$. Note
  that $J(u+v)=\sup_CB$, where $B=\{x\in X\mid x\le u+v\}$. For every
  $x\in B$, find $y$ and $z$ as in Lemma~\ref{RDP}. It follows that
  \begin{displaymath}
    B\subseteq\{y\in X\mid y\le u\}+\{z\in X\mid z\le v\}\le J(u)+J(v),
  \end{displaymath}
  so that $J(u+v)\le J(u)+J(v)$.

  Finally, if $u$ and $v$ are two arbitrary elements of $D$, it
  follows from $v^-\in X$ that
  \begin{math}
    J(u+v)=J(u+v^+-v^-)=J(u+v^+)-v^-\le J(u)+J(v^+)-v^-
    =J(u)+J(v^+-v^-)=J(u)+J(v).
  \end{math}
\end{proof}

\section{Applications}

We now use our representation of sup-completion to provide simple
proofs of several results of~\cite{Azouzi:19,Azouzi:22} and improve
some of them. Throughout this section, $X$ is (again) an order
complete vector lattice. We write $X^u$ and $X^s$ for the universal
completion and the sup-completion of $X$, respectively. As before, we
represent $X^u$ as an order dense sublattice of $C^\infty(K)$ for some
extremally disconnected compact $K$; we represent $X^s$ as in
Theorem~\ref{main}.

We start by revisiting Corollary~7 in~\cite{Azouzi:19}. It follows
immediately from Theorem~\ref{main} that every non-negative function
in $C(K,\overline{\mathbb R})$ belongs to~$X^s$. In particular, we
have $X^u_+\subseteq X^s$. Suppose now that $Y$ is an order dense
order complete sublattice of~$X$. Then clearly $Y$ is still order
dense in $C^\infty(K)$, hence $C^\infty(K)=Y^u$, and
\begin{displaymath}
    Y^s=\bigl\{u\in C(K,\overline{\mathbb R})\mid u\ge f
    \mbox{ for some $f$ in }Y\bigr\}.
\end{displaymath}
It follows that both $(X^s)_+$ and $(Y^s)_+$ consist of all
non-negative function in $C(K,\overline{\mathbb R})$ and, therefore,
$(X^s)_+=(Y^s)_+$. Note that if $Y^s=X^s$ then $Y=X$ because every
negative $f\in X$ belongs to $Y$ by~\eqref{ideal}.

Recall that if $X$ has a weak unit $e$, one can choose the
representation so that $e=\one$.

\begin{proposition}[\cite{Azouzi:19}]
  If $e$ is a weak unit in $X_+$ and $0\le u\in X^s$ then
  $u=\sup_n(ne\wedge u)$.
\end{proposition}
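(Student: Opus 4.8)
The plan is to work entirely inside the functional representation, using that the representation of $X^u$ in $C^\infty(K)$ has been chosen so that $e=\one$; thus $ne$ is the constant function $n$ on $K$. First I would record that for each $n\in\mathbb N$ the element $ne\wedge u$ lies in $C=X^s$ (indeed in $X$): since $0\le u$ and $0\le ne$, we have $0\le ne\wedge u\le ne\in X$, so $ne\wedge u\in X$ by the ideal property~\eqref{ideal} (or by the observation, noted after the proof of~\eqref{ideal}, that $f\le w\le g$ with $f,g\in X$ forces $w\in X$). Hence $\{ne\wedge u\mid n\in\mathbb N\}$ is an increasing family in $C$ bounded above by $u$, so its supremum in $C$ exists by~\eqref{oc}.

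Since $ne\wedge u\le u$ for every $n$, we get $\sup_n(ne\wedge u)\le u$ at once. For the reverse inequality I would argue pointwise. Fix $t\in K$. If $u(t)<\infty$, then for every integer $n>u(t)$ we have $(ne\wedge u)(t)=u(t)$; if $u(t)=\infty$, then $(ne\wedge u)(t)=n\to\infty=u(t)$. In either case $\sup_n(ne\wedge u)(t)=u(t)$. Consequently, if $v\in C$ satisfies $ne\wedge u\le v$ for all $n$, then $v(t)\ge\sup_n(ne\wedge u)(t)=u(t)$ for every $t\in K$, i.e.\ $v\ge u$. Therefore $u$ is the least upper bound of $\{ne\wedge u\mid n\in\mathbb N\}$ in $C$, which is exactly the assertion $u=\sup_n(ne\wedge u)$.

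There is no genuine obstacle here; the only point deserving a moment's care is that suprema in $C(K,\overline{\mathbb R})$, and hence in $C$, are not computed pointwise in general, so one should not simply assert that $\sup_n(ne\wedge u)$ is the pointwise supremum. The argument above avoids this by checking directly that $u$ dominates the family while any competing upper bound dominates $u$ pointwise; equivalently, one may note that the pointwise supremum of the family equals $u$, which is continuous, and a continuous pointwise supremum is automatically the lattice supremum in $C(K,\overline{\mathbb R})$.
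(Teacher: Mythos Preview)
Your proof is correct and follows essentially the same route as the paper: reduce to $e=\one$ via the representation, note that $\sup_n(n\one\wedge u)\le u$ is immediate, and then establish the reverse inequality by a pointwise case split on whether $u(t)$ is finite or infinite. The paper's version is simply terser, defining $v=\sup_n(n\one\wedge u)$ and observing that $v(t)\ge n\wedge u(t)$ for all $n$ forces $v(t)\ge u(t)$; your additional remarks that $ne\wedge u\in X$ and that lattice suprema need not be pointwise are correct and welcome, but not essential to the argument.
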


\begin{proof}
  WLOG, $e=\one$. Let $v=\sup_n(n\one\wedge u)$. Clearly, $v\le
  u$. Fix $t\in K$. Then $v(t)\ge n\wedge u(t)$ for all
  $n$. Considering separately the cases when $u(t)=\infty$ and when
  $u(t)<\infty$, we see that $v(t)\ge u(t)$ and, therefore, $v\ge u$.
\end{proof}

By Maeda-Ogasawara theory (see, e.g., Chapter~7
in~\cite{Aliprantis:03}), there is a one-to-one correspondence between
clopen subsets of $K$ and bands in~$X$: if $U$ be a clopen set in $K$
then the set $\{x\in X\mid\supp x\subseteq U\}$ is a band in~$X$, and
every band in $X$ is of this form; we denote it by~$B_U$.  The
corresponding band projection $P_U$ is given by
$P_Ux=x\cdot\one_{U}$. It is clear that the universal completion of
$B_U$ is $C^\infty(U)$ and, hence the sup-completion of $B_U$ can be
computed as in Theorem~\ref{main}.

In particular, for $a\in X$, the principal
band projection $P_a$ is given by the following: for $x\in X$ and
$t\in K$, we have
\begin{displaymath}
  (P_ax)(t)=
  \begin{cases}
    x(t) & \mbox{ if $t\in U$, and}\\
    0 & \mbox{ otherwise },
  \end{cases}
  \qquad\mbox{ where }U=\overline{\{a\ne 0\}}.
\end{displaymath}
The preceding formula clearly extends to the case when $a\in X^s$,
yielding a band projection on $X$.

\smallskip

In Theorem~15 of~\cite{Azouzi:22}, the authors prove that every
element $u$ in $X^s$ can be split into its finite and infinite
parts. Using our representation, this is now easy: let $U$ be the
closure of $\{u<\infty\}$, then the finite part $x$ of $u$ is defined
as $x=P_Uu$ and is the function that agrees with $u$ on $U$ and
vanishes on $U^C$, while the infinite part $w$ of $u$ is defined as
$P_{U^C}u$ and is the function that vanishes on $U$ and is identically
$\infty$ on $U^C$. Clearly, $u=x+w$ and $x\perp w$. It follows from
$x\in C^\infty(K)$ that $x\in X^u$. It follows from $u\in X^s$ that
$u\ge f$ for some $f\in X$, hence $P_Uu\ge P_Uf$ and, therefore,
$x\in X^s$. Clearly, $u\in X^u$ iff its infinite part $w$ equals zero
and $u$ equals $x$, its finite part. Note also that $x$ may be viewed
as the restriction of $u$ to $U$; so we may view $x$ as an element of
$C^\infty(U)$ and of~$U^s$.

We now extend the Riesz Decomposition Theorem to $X^s$;
cf.~Lemma~\ref{RDP} and \cite[Lemma~1]{Azouzi:19}.

\begin{lemma}\label{RDPs}
  Suppose that $x\le u+v$ for
  some $x,u,v\in X^s$ with $v\ge 0$. Then there exist
  $y,z\in X^s$ such that $x=y+z$, $y\le u$, and $z\le v$.
\end{lemma}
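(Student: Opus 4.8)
The plan is to reduce the statement to the case $x\in X$, which is Lemma~\ref{RDP}, by approximating $x$ from below by elements of $X$, decomposing each approximant via Lemma~\ref{RDP}, and passing to suprema; Lemma~\ref{distr} will do the bookkeeping for the interchange of $+$ with $\sup$.

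First I would set $A=\{a\in X\mid a\le x\}$, which is nonempty by the definition of $X^s$. By \eqref{o-dense} (applied in $X^s$, represented as in Theorem~\ref{main}) we have $x=\sup A$, and $A$ is upward directed since it is closed under $\vee$. For each $a\in A$ we have $a\le x\le u+v$ with $a\in X$, $u,v\in X^s$ and $v\ge 0$, so Lemma~\ref{RDP} yields $y_a,z_a\in X$ with $a=y_a+z_a$, $y_a\le u$, $z_a\le v$; inspecting its proof, $y_a=a\wedge u$ and $z_a=a-a\wedge u$. The crucial point is that $a\mapsto y_a$ and $a\mapsto z_a$ are order preserving on $A$: monotonicity of $y_a$ is immediate from that of $\wedge$, and for $z_a$ one uses \eqref{dist} to write $a+(a'\wedge u)=(a+a')\wedge(a+u)$ and $a'+(a\wedge u)=(a+a')\wedge(a'+u)$, so that $a\le a'$ forces $a+u\le a'+u$ and hence $a+(a'\wedge u)\le a'+(a\wedge u)$, i.e.\ $z_a\le z_{a'}$ in the vector lattice $X$.

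Next I would define $y=\sup\{y_a\mid a\in A\}$ and $z=\sup\{z_a\mid a\in A\}$, which exist in $X^s$ by \eqref{oc}; then $y,z\in X^s$, and $y\le u$, $z\le v$ because these inequalities hold termwise. It remains to prove $x=y+z$. One inequality is easy: $a=y_a+z_a\le y+z$ for every $a\in A$, so $x=\sup A\le y+z$. For the reverse inequality I would apply Lemma~\ref{distr} twice (legitimate since $\{y_a\mid a\in A\}$ and $\{z_a\mid a\in A\}$ are subsets of $X$): first, with $y\in X^s$ as the cone element, $y+z=y+\sup\{z_a\}=\sup\{y+z_a\mid a\in A\}$; then, for each fixed $a$, with $z_a\in X\subseteq X^s$ as the cone element, $y+z_a=z_a+\sup\{y_{a'}\}=\sup\{y_{a'}+z_a\mid a'\in A\}$. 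Combining, $y+z=\sup\{y_{a'}+z_a\mid a,a'\in A\}$. Given $a,a'\in A$, put $a''=a\vee a'\in A$; monotonicity gives $y_{a'}+z_a\le y_{a''}+z_{a''}=a''\le x$, whence $y+z\le x$, and therefore $x=y+z$.

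The step I expect to be the only genuine obstacle is justifying the passage to suprema: the naive identity $\sup\{y_a\}+\sup\{z_a\}=x$ is false for arbitrary families, and it is exactly the monotonicity of $y_a$ and $z_a$ together with Lemma~\ref{distr} — which is what permits moving a cone element across a supremum of members of $X$ — that makes it go through. Everything else (existence of all the suprema, the termwise bounds, and the reduction to Lemma~\ref{RDP} itself) is a direct appeal to \eqref{o-dense}, \eqref{oc}, \eqref{dist}, Lemma~\ref{RDP}, and Lemma~\ref{distr}.
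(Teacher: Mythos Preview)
Your proof is correct, but it follows a genuinely different route from the paper's.

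The paper works directly in the representation: it passes to the clopen band $\overline{\{u<\infty\}}$ and its complement, handling the trivial case $u\equiv\infty$ by taking $y=x$, $z=0$, and otherwise reducing to $u\in C^\infty(K)$; it then repeats the trick for $v$, so that in the remaining case $u,v,x\in C^\infty(K)$ and the explicit computation $y=x\wedge u$, $z=x-x\wedge u$ from Lemma~\ref{RDP} goes through verbatim. Your argument instead stays order-theoretic: you approximate $x$ from below by $a\in X$, apply Lemma~\ref{RDP} with the \emph{specific} choice $y_a=a\wedge u$, $z_a=a-a\wedge u$, observe via \eqref{dist} that both families are increasing in $a$, and then use Lemma~\ref{distr} and directedness of $A$ to identify $\sup y_a+\sup z_a$ with $\sup(y_a+z_a)=x$. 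The paper's approach is shorter and exploits the band decomposition that the $C(K,\overline{\mathbb R})$ picture makes available; your approach has the virtue of using only the cone axioms \eqref{order}--\eqref{dist}, Lemma~\ref{RDP}, and the distributivity Lemma~\ref{distr}, so it would transfer unchanged to any abstract sup-completion once those two lemmas are in hand.
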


\begin{proof}
  Let $U$ be the closure of $\{u<\infty\}$. Then $U$ is clopen and
  $X=B_U\oplus B_{U^C}$.  It suffices to prove the statement on $U$
  and on~$U^C$. That is, replacing $K$ with $U$ or~$U^C$, we reduce
  the problem to two special cases: when $u$ is constant infinity and
  when $u<\infty$ on a dense open set. If $u$ is constant infinity
  then we take $y=x$ and $z=0$; it is clear that all the requirements
  are satisfied. Suppose now that $u<\infty$ on a dense open set. That
  is, $u\in C^\infty(K)$.

  Put $y=x\wedge u$. Then $y\in X^s$ by~\eqref{lattice}, and
  $y\le u$ yields $y\in C^\infty(K)$. It follows that
  $z:=x-x\wedge u=x-y$ is defined in $X^s$. Clearly, $x=y+z$
  in~$X^s$. It is left to show that $z\le v$.

  Again, this is trivially satisfied on the band corresponding to the
  infinite part of $v$ as $v$ is constant infinity there. So, passing
  to the complementary band corresponding to
  $\overline{\{v<\infty\}}$, we may assume WLOG that $v\in
  C^\infty(K)$. It follows from $x\le u+v$ that $x\in C^\infty(K)$,
  hence also $y,z\in C^\infty(K)$. As in the proof of  Lemma~\ref{RDP}, we have
  \begin{displaymath}
    v-z=(v-x)+x\wedge u=(v-x+x)\wedge(v-x+u)=v\wedge(u+v-x)\ge 0.
  \end{displaymath}
\end{proof}

\begin{theorem}[\cite{Azouzi:19}]\label{P-lambda}
  Let $e\in X_+$ be a weak unit and $u\in X^s$. Then 
  \begin{displaymath}
    u\notin X^u\qquad\mbox{ iff }\qquad
    \inf_{\lambda\in(0,\infty)}P_{(u-\lambda e)^+}e>0.
  \end{displaymath}
\end{theorem}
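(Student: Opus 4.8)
The plan is to carry out the whole argument inside the functional representation. By the remark immediately preceding the statement we may assume without loss of generality that $e=\one$, so that $X^u=C^\infty(K)$ and $X^s$ is the set of Theorem~\ref{main}. I would first translate the left-hand side into topology. The set $\{u<\infty\}$ is open since $u$ is continuous, and $u$ fails to lie in $X^u$ precisely when its infinite part is nonzero, i.e.\ precisely when $\{u<\infty\}$ is not dense, i.e.\ precisely when $\{u=\infty\}$ has nonempty interior. So the goal reduces to showing that $\{u=\infty\}$ has nonempty interior iff $\inf_{\lambda\in(0,\infty)}P_{(u-\lambda e)^+}e>0$.

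Next I would unwind the right-hand side. For each $\lambda>0$ the element $(u-\lambda\one)^+$ is a non-negative function in $C(K,\overline{\mathbb R})$, hence lies in $X^s$, and $\{(u-\lambda\one)^+\ne0\}=\{u>\lambda\}$; so by the band-projection formula (extended to elements of $X^s$) we get $P_{(u-\lambda\one)^+}\one=\one_{\overline{\{u>\lambda\}}}$. As $\lambda$ grows these clopen indicators decrease, and the positive integers are cofinal, so
\begin{displaymath}
  \inf_{\lambda\in(0,\infty)}P_{(u-\lambda e)^+}e=\inf_{n\in\mathbb N}\one_{\overline{\{u>n\}}}=:g.
\end{displaymath}
Since $X$ is an ideal in $C^\infty(K)$, this infimum is computed the same way in $X$, in $C^\infty(K)$, or in $C(K)$: $g$ is simply the largest continuous minorant of the functions $\one_{\overline{\{u>n\}}}$, and $0\le g\le\one$.

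The crux is the identity $\bigcap_{\lambda>0}\overline{\{u>\lambda\}}=\{u=\infty\}$. The inclusion $\supseteq$ is immediate; for $\subseteq$, if $u(t)=c<\infty$ then, picking $\lambda>c$, the set $\{u<\lambda\}$ is an open neighbourhood of $t$ disjoint from $\{u>\lambda\}$, so $t\notin\overline{\{u>\lambda\}}$. Granting this, I would finish with a short two-sided argument. If $\{u=\infty\}$ has nonempty interior then, by extremal disconnectedness, it contains a nonempty clopen set $Z$; then $\one_Z$ is a continuous function with $\one_Z\le\one_{\overline{\{u>n\}}}$ for every $n$, so $g\ge\one_Z>0$. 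Conversely, if $g>0$ then $\{g>0\}$ is a nonempty open set, and for $t\in\{g>0\}$ we have $0<g(t)\le\one_{\overline{\{u>n\}}}(t)$, forcing $t\in\overline{\{u>n\}}$ for all $n$; hence $\{g>0\}\subseteq\bigcap_n\overline{\{u>n\}}=\{u=\infty\}$, so $\{u=\infty\}$ has nonempty interior and $u\notin X^u$. The topology here is easy; the step I expect to require the most care is the bookkeeping about where the infimum is taken, together with the fact that it is realized pointwise as the greatest continuous minorant — everything downstream leans on that.
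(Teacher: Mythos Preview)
Your proof is correct and follows essentially the same route as the paper: both arguments work in the $C^\infty(K)$ representation with $e=\one$, identify $P_{(u-\lambda\one)^+}\one$ with $\one_{\overline{\{u>\lambda\}}}$, and then show each direction by exhibiting a clopen set inside $\{u=\infty\}$ (forward) and by observing that the open set $\{g>0\}$ is contained in $\{u=\infty\}$ (converse). Your version is slightly more explicit in isolating the identity $\bigcap_{\lambda>0}\overline{\{u>\lambda\}}=\{u=\infty\}$, but the underlying argument is the same.
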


\begin{proof}
  WLOG, we may chose the representation so that $e=\one$. Let
  $v=\inf_{\lambda\in(0,\infty)}v_\lambda$, where
  $v_\lambda=P_{(u-\lambda\one)^+}\one$.  Suppose that $u\notin X^u$. Then
  $\Int\{u=\infty\}$ in non-empty; denote this set by~$V$. For every
  $t\in V$ and every $\lambda\in(0,\infty)$ we have
  $(u-\lambda\one)^+(t)=\infty$, so that $v_\lambda(t)=1$. It follows
  that $v_\lambda\ge\one_{\overline{V}}$ and, therefore,
  $v\ge\one_{\overline{V}}>0$.

  Conversely, suppose that $v>0$. Then the set $W:=\{v>0\}$ is
  open. Fix $t\in W$. For every $\lambda\in(0,\infty)$ it follows from
  $v_\lambda\ge v$ that $v_\lambda(t)>0$, so that
  $(u-\lambda\one)^+(t)>0$ and, therefore, $u(t)\ge\lambda$. It
  follows that $u(t)=\infty$ for all $t\in W$ and therefore, $u\notin
  C^\infty(K)$.
\end{proof}

It is proved in Theorem~19 in~\cite{Azouzi:22} that if $0\le u\in X^s$
and $e$ is a weak unit in $X$ then
$P_we=\inf_{\lambda\in(0,\infty)}P_{(u-\lambda e)^+}e$, where $w$ is
the infinite part of $u$. This fact can now be easily proved
analogously to Theorem~\ref{P-lambda}. Other properties of
decompositions of $u$ into the finite and the infinite part
in~\cite{Azouzi:22} can be proved in a similar way.

\smallskip

Fix a weak unit $e$ in~$X_+$. WLOG, we may assume that $e$ corresponds
to $\one$ in the $C^\infty(K)$ representation of~$X^u$. Similarly to
how we defined addition on $C^\infty(K)$, one can define
multiplication, making $X^u$ into an f-algebra with $e$ being a
multiplicative unit. Recall that $(X^s)_+$ consists of all continuous
positive functions from $K$ to $[0,\infty]$. Similarly to how we
defined addition on $X^s$, we can define product on $(X^s)_+$. That
is, $uv=w$ iff $u(t)v(t)=w(t)$ for all $t$ in an open dense set; we
again follow the convention that $0\cdot\infty=0$. It is easy to see
that the resulting product agrees with that defined in Section~3.2
of~\cite{Azouzi:22}.

\end{document}